\newtheorem{thm}{Theorem}
\newtheorem*{thm*}{Theorem}
\newtheorem{defn}{Definition}
\newtheorem{example}{Example}
\title[Projective Equivalence on surfaces of neg. Euler characteristic]{Projectively equivalent Finsler metrics on surfaces of negative Euler characteristic}
\author{Julius Lang}
\keywords{Finsler metric, projective equivalence, integrable Hamiltonians, topological entropy, geodesic flow}
\subjclass[2000]{Primary 53C60, Secondary 37J35}
\address{Affilation and adresses of the author: Friedrich-Schiller University Jena, FMI\\
 Ernst-Abbe-Platz 2, 07743 Jena, Germany\\
 julius.lang@uni-jena.de}
\begin{document}

	\begin{abstract}
	We proof that on a surface of negative Euler characteristic, two real-analytic Finsler metrics have the same unparametrized oriented geodesics, if and only if they differ by a scaling constant and addition of a closed 1-form.
	\end{abstract}
	
	\maketitle

\section{Introduction}
	Two Finsler metrics on the same manifold are called \textit{projectively equivalent}, if they have the same unparametrized, oriented geodesics. It is trivial that any two Finsler metrics $\hat F, \check F: TM \to \mathbb R$ related by $\hat F = \lambda \check F + \beta$, where $\lambda > 0$ is a constant and $\beta$ is a closed 1-form on $M$, are projectively equivalent. In this paper we proof that, under the assumption of real-analicity, on a closed surface of negative Euler characteristic any projective equivalent metrics must be related in this way:
	
	\begin{thm}\label{maintheorem}
	On a surface $\mathcal S$ of negative Euler characteristic, two real-analytic Finsler metrics $\hat F, \check F$ are projectively equivalent, if and only if $\hat F = \lambda \check F + \beta$ for some $\lambda >0$ and a closed 1-form $\beta$.
	\end{thm}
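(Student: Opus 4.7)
The trivial direction is noted in the introduction, so the plan is to prove the converse: projective equivalence of two real-analytic Finsler metrics on $\mathcal S$ forces the relation $\hat F = \lambda \check F + \beta$. I would adapt the strategy used by Matveev--Topalov and Bolsinov--Matveev for projectively equivalent Riemannian metrics to the Finsler setting, splitting the argument into two essentially independent pieces: first, from projective equivalence manufacture a real-analytic, positively homogeneous, non-trivial first integral $I$ of the geodesic flow of $\hat F$; second, show that on a surface with $\chi(\mathcal S) < 0$ the geodesic flow has positive topological entropy, which forces any analytic integral to be a function of $\hat F$, and from this rigidity deduce the claimed relation.

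For the first step, I would work on the slit tangent bundle $TM \setminus 0$ with Hamiltonians $H_{\hat F} = \tfrac12 \hat F^2$ and $H_{\check F} = \tfrac12 \check F^2$ and their Finsler Legendre transforms. Projective equivalence means that the $H_{\check F}$-flow is, along each shared geodesic, a positive reparametrization of the $H_{\hat F}$-flow, so the ratio $\check F/\hat F$ evolves in a controlled way along $\hat F$-geodesics. Combining this reparametrization data with the positive $1$-homogeneity of both metrics in the fibres should yield a real-analytic function $I : TM \setminus 0 \to \mathbb R$, positively homogeneous of some fixed degree and invariant under the $\hat F$-geodesic flow. The Riemannian surface analogue, which extracts a quadratic integral from the eigenvalues of $\bar g \cdot g^{-1}$, serves as a template, but the Finsler version cannot use bilinearity.

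For the second step, I would invoke that closed surfaces with $\chi(\mathcal S)<0$ have fundamental groups of exponential growth, so by Dinaburg--Manning the geodesic flow of any Finsler metric on $\mathcal S$ has positive topological entropy on every level set $\{\hat F = c\}$. This level set is a three-dimensional real-analytic Hamiltonian system, and by the theorems of Taimanov and Paternain on analytic non-integrability, positive topological entropy precludes the existence of a real-analytic first integral independent of the Hamiltonian. Hence the integral $I$ of Step 1 must be functionally dependent on $\hat F$. Translating this back to the metric level, such functional dependence should force $\check F - \lambda \hat F$ to be a fiberwise linear function on $TM$, i.e.\ a $1$-form $\beta$; the classical fact that $\hat F$ and $\hat F + \beta$ share unparametrized geodesics iff $d\beta = 0$ then completes the proof.

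\textbf{Main obstacle.} The delicate step is the construction of $I$: in the Riemannian case Dini's explicit integral is algebraic and exploits bilinearity, whereas in the Finsler setting one must produce a real-analytic, globally defined, positively homogeneous integral on $TM \setminus 0$ using only the projective data and the positive $1$-homogeneity of the metrics. Equally important, one must ensure that $I$ is non-trivial (not a function of $\hat F$ alone) precisely when the pair $(\hat F, \check F)$ is not already related by the claimed formula, so that the positive-entropy obstruction actually forces the desired rigidity rather than becoming vacuous.
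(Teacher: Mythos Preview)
Your overall architecture matches the paper's exactly: construct a real-analytic integral $I$ of the $\hat F$-geodesic flow from the projective equivalence, then use exponential growth of $\pi_1(\mathcal S)$ together with Dinaburg--Manning and Paternain's analytic non-integrability result to force $I$ to be trivial, and finally read off the relation $\hat F=\lambda\check F+\beta$.

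What you are missing, and what you yourself flag as the main obstacle, is precisely the paper's key lemma. The paper does not try to adapt the Dini/Matveev--Topalov integral. Instead it exploits the fact that on a surface the fiber Hessians $\hat h_{ij}=\hat F_{\xi^i\xi^j}$ and $\check h_{ij}=\check F_{\xi^i\xi^j}$ are both rank-one matrices with the same kernel (the radial direction), hence proportional at every $(x,\xi)$; the integral is simply the proportionality factor
\[
I(x,\xi)=\frac{\operatorname{tr}\hat h}{\operatorname{tr}\check h}\Big|_{(x,\xi)}.
\]
That this is invariant under the geodesic spray follows from differentiating the Rapcs\'ak condition: both traces satisfy the same first-order linear ODE $S(\operatorname{tr}h)=c(x,\xi)\operatorname{tr}h$ along $\hat F$-geodesics, so their ratio is conserved. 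This construction is specific to dimension two and does not use bilinearity at all.

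This choice of $I$ also collapses your vague ``translate back'' step into a one-liner. The function $I$ is positively $0$-homogeneous, while $\tfrac12\hat F^2$ is $2$-homogeneous; thus $dI$ and $d(\tfrac12\hat F^2)$ can only be proportional where $dI=0$, and analyticity forces $I\equiv\lambda$. But $I\equiv\lambda$ literally says $\hat h_{ij}=\lambda\check h_{ij}$, i.e.\ all second fiber derivatives of $\hat F-\lambda\check F$ vanish, so the difference is a $1$-form $\beta$, necessarily closed by the trivial direction. With a generic integral (e.g.\ one built from $\check F/\hat F$) this final implication would be far from automatic, so the specific form of $I$ is doing double duty.
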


	The corresponding result for Riemannian metrics has been obtained in \cite[Corollary 3]{TopalovMatveevMetricWithErgodic} (see also \cite{TopalovMatveevGeodesicEquivalenceViaIntegrability,MettlerGeodesicRigidity, MettlerPaternainConvexProjective}), where the assumption of real-analicity is not necessary.
	
	The outline of the proof is the following.
	Recall that the geodesic spray $S$ of a Finsler metric $F$ can be obtained as the Hamiltonian vector field of the Hamiltonian system on $T \mathcal S \backslash 0$ with symplectic form $\omega = d \theta$, where $\theta=(\tfrac12 F^2)_{\xi^i} dx^i$ is the \textit{Hilbert 1-form}, and Hamiltonian $\tfrac12 F^2$. A Hamiltonian system on $T \mathcal S \backslash 0$ is called \textit{integrable}, if there exists a function $I: T \mathcal S \backslash 0 \to \mathbb R$ constant along the integral curves of the Hamiltonian system and such that the differentials of the Hamiltonian and $I$ are linearly independent on an open and dense subset. A function $I$ constant along the integral curves of $S$ is called an \textit{integral}.
	
	As we are in dimension 2, in all local coordinates the fiber-Hessians of two metrics $\hat F$ and $\check F$ given by $(\hat h_{ij})=(\hat F_{\xi^i \xi^j})$ and $( \check h_{ij})=(\check F_{\xi^i \xi^j})$ must be proportional at every vector $(x,\xi) \in T\mathcal S \backslash 0$ and the factor of proportionality $I: T\mathcal S\backslash 0 \to \mathbb R$ is given by 
		$I(x,\xi)= \frac{\operatorname{tr} \hat h}{\operatorname{tr}  \check h}|_{(x,\xi)}$.
	Indeed by 1-homogeneity of a metric $F$ we have $h_{ij}|_{(x,\xi)}\xi^j = 0$ and it follows that 
		\begin{equation}\label{EquationHessian}
		h|_{(x,\xi)} = \frac{\operatorname{tr} h|_{(x,\xi)}}{(\xi^1)^2 + (\xi^2)^2} \begin{pmatrix}(\xi^2)^2 & - \xi^1 \xi^2 \\ -\xi^1 \xi^2 & (\xi^1)^2	\end{pmatrix}.
		\end{equation}	
	When the metrics $\hat F, \check F$ are projectively equivalent, we show: 
	\begin{enumerate}[(a)]
		\item \label{FactIisIntegral} The factor of proportionality $I$ is independent of the choice of local coordinates and an integral for the geodesic flow of both metrics.
	\end{enumerate}		
	 This follows from an investigation of the so called Rapscak conditions for projective metrization similar to \cite{CrampinMestdagSaundersTheMultiplierApproach}. The proof is given in section \ref{SectionIisIntegral}. 
	 Alternatively, one can obtain the integral by a general construction for \textit{trajectory equivalent} Hamiltonian systems, similarly to \cite{MatveevTopalovTrajectoryEquivalence}.
	
	In order to show that the integral $I$ must be constant on $T \mathcal S \backslash 0$, we combine two classical results from the theory of integrable systems:
	\begin{enumerate}[(a)]\setcounter{enumi}{1}
		\item \label{FactPositiveEntropy} The \textit{topological entropy} of the geodesic flow of a metric on a compact manifold, whose fundamental group is \textit{of exponential growth}, is positive.
		\item \label{FactEntropyIsZero} If a 4-dimensional Hamiltonian system is integrable by a real-analytic integral independent of the Hamiltonian, then its \textit{topological entropy} vanishes.
	\end{enumerate}
	
	We recall the definition of \textit{topological entropy} of a Hamiltonian system and \textit{exponential growth} of a group in section \ref{SectionPositiveEntropy}. Proposition \ref{FactPositiveEntropy} was proven for the geodesic flow of Riemannian metrics and it seems to be commonly accepted that it generalizes straight-forwardly to the Finsler case. Nonetheless, in section \ref{SectionPositiveEntropy} we give a proof based on the classical proofs for the Riemannian case from \cite{DinaburgOnTheRelationsAmongVariousEntropy,ManningTopologicalEntropyForGeodesicFlows}.
	A proof of \ref{FactEntropyIsZero} can be found in \cite{PaternainEntropyAndCompletelyIntegrableHamiltonianSystems}. A similar argument to show that an integral must be constant was used in \cite{PaternainFinslerStructures}.

	The propositions \ref{FactIisIntegral}, \ref{FactPositiveEntropy} and \ref{FactEntropyIsZero} imply Theorem \ref{maintheorem}:	
	
	\begin{proof}[Proof of Theorem \ref{maintheorem}]
	Let $\hat F,\check F$ be projectively equivalent real-analytic Finsler metrics on a surface $\mathcal S$ of negative Euler characteristic. Then the fundamental group of $\mathcal S$ is of exponential growth. By \ref{FactPositiveEntropy}, the geodesic flow of $\hat F$ has positive entropy and thus by \ref{FactEntropyIsZero} the differential of any real-analytic integral must be proportional to the differential of $\tfrac12 \hat F^2$ at least on a set $A \subseteq T \mathcal S \backslash 0$ admitting an accumulation point. By \ref{FactIisIntegral} the function $I$ is such an integral and by homogeneity $V(\tfrac12 \hat F^2)|_{(x,\xi)} = \hat F^2(x,\xi) \not=0$ and $V(I)|_{(x,\xi)}=0$, where $V= \xi^i \partial_{\xi^i}$ is the vertical vector field. Hence the differential of $I$ must vanish on $A$ and by analicity everywhere. Thus $I$ must be a constant $\lambda$ on $T \mathcal S \backslash 0$.
	
	This implies that $\operatorname{tr}(\hat h) = \lambda \operatorname{tr} (\check h)$. As the Hessians have only one independent component - see equation (\ref{EquationHessian}), it follows that $\hat h_{ij} = \lambda \check h_{ij}$ and thus $\hat F = \lambda \check F + \beta$ for some 1-form $\beta$ on $\mathcal S$. But as $\hat F, \check F$ are projectively equivalent, so are $\lambda \hat F,  \check F$, which differ by the 1-form $\beta$, which then must be closed (see Example \ref{ExampleTrivialEquivalence}). 
	\end{proof}

	The assumption of real-analicity is necessary: On any closed surface there are (non real-analytic) projectively equivalent metrics that are not related by scaling and addition of a closed 1-form:
	\begin{example}Let $\hat F$ be the round metric on $S^2$. We claim that there is a smooth metric $\check F$ on $S^2$ projectively equivalent to $\hat F$, such that
	\begin{itemize}
		\item $\hat F$ and $\check F$ coincide over an open, non-empty set $V \subseteq S^2$,
		\item but are not related by $\hat F = \lambda F + \beta$ for any $\lambda > 0$ and any 1-form $\beta$.
	\end{itemize}
	Then by attaching orientable or non-orientable handles to $S^2$ (Figure \ref{subfig1}) in the set $V$, we obtain projectively equivalent metrics on any closed surface, that are not related by scaling and addition of a closed 1-form.
	
	The metric $\check F$ can be constructed by the method from \cite{alvarez,Arcostanzo}: Suppose the space of unparametrized geodesics of a reversible metric on a surface forms a smooth manifold endowed with a positive measure. Define the distance $d: \mathcal S \times \mathcal S \to \mathbb R$ of two points on the surface as the measure of curves intersecting the unique shortest geodesic segment conencting the points (Figure \ref{subfig2}). Then the unparametrized geodesics of the original metric are shortest for the constructed distance function. Then the function  $F(x,\xi):= \frac{d}{dt}|_{t=0}d(x,c(t))$, where $c$ is any curve such that $c(0)=x$ and $\dot c(0)=\xi$, is a Finsler metric projectively equivalent to the original metric.
	
	Applying this procedure to the round sphere with a density function \linebreak $\lambda: S^2 \to \mathbb R_{>0}$ satisfying $\lambda(-x)=\lambda(x)$ and identifying an oriented great circle by its normal (Figure \ref{subfig3}), one obtains the following family of Finsler metrics
	$$F(x,\xi) := \tfrac14 \int_{\substack{\eta \in T_x S^2 \\ |\eta|=1}} \lambda  \big(x \times \eta \big)
	 |\nu_\xi| \qquad \text{with }\nu_\xi(\cdot) = \langle \xi, \cdot \rangle,$$
	all projectively equivalent to the round metric, where $x \times \eta$ denotes the cross-product in $\mathbb R^3$ and $\langle \cdot, \cdot \rangle$ the Euclidean inner product.
	 
	 For $\hat \lambda \equiv 1$, we obtain the round metric $\hat F$.
	Let $U = \{(x_1,x_2,x_3) \in S^2 \mid |x_3| \leq \tfrac1{\sqrt 2}\}$, denote by $V$ its complement and choose any $\check \lambda: S^2 \to \mathbb R_{> 0}$  such that $\check\lambda|_U \equiv 1$, but $\check\lambda|_V >1$.
	The obtained metric $\check F$ coincides with $\hat F$ over $V$, because for $(x,\xi) \in TV$ with $|\xi|=1$, the cross product $x \times \xi$ is in $U$ (Figure \ref{subfig3}).
	\begin{figure}[t]
	\begin{subfigure}[c]{0.32\textwidth}
\includegraphics[scale=0.66]{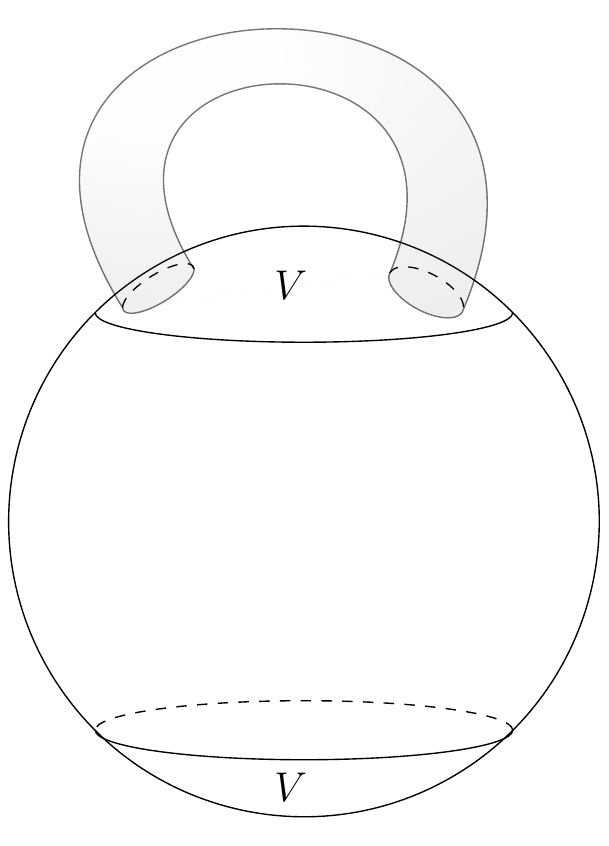}
	\subcaption{Attaching handles in the region over which the metrics coincide.\\}\label{subfig1}
	\end{subfigure}~~
	\begin{subfigure}[c]{0.32\textwidth}
\includegraphics[scale=0.66]{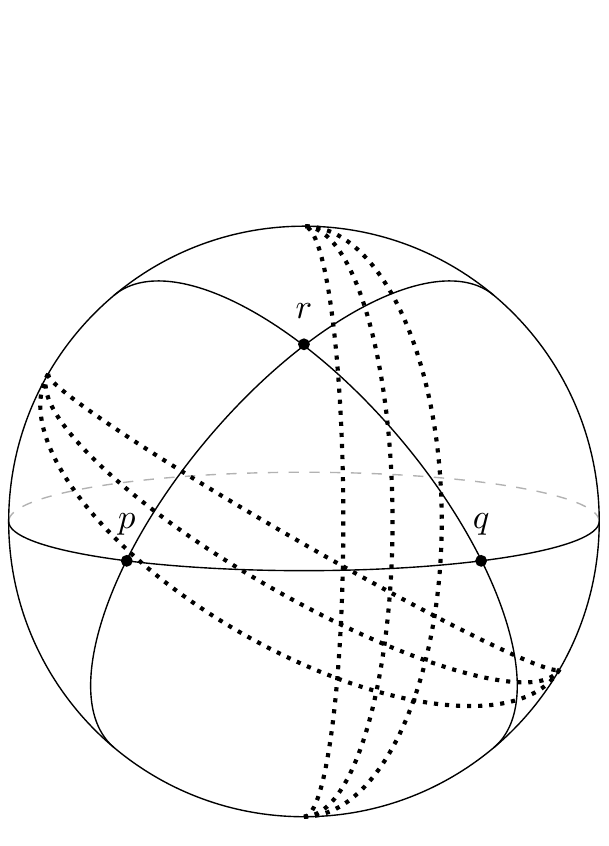}
	\subcaption{The distance $d(p,q)$ is the measure of all curves (dotted) intersecting the great circle segment.}\label{subfig2}
	\end{subfigure}~~
	\begin{subfigure}[c]{0.32\textwidth}
\includegraphics[scale=0.66]{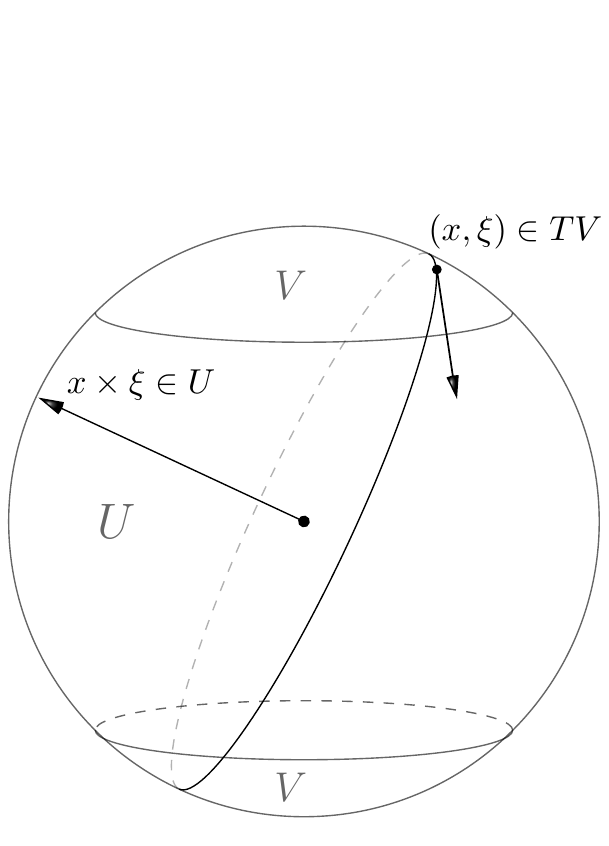}
	\subcaption{The metrics $\check F$ and $\hat F$ coincide on $TV$, because $\check \lambda|_U = \hat \lambda|_U \equiv 1$.\\}\label{subfig3}
	\end{subfigure}
	\caption{Construction of non-trivially projectively equivalent metrics on any closed surface.}
	\end{figure}
	\end{example}

	\section{Projectively equivalent Finsler metrics and proof of \ref{FactIisIntegral}}
	\label{SectionIisIntegral}
	
	Let $M$ be a smooth manifold, $TM \backslash 0$ the tangent bundle with the origins removed and $(x,\xi)$ local coordinates on $TM$.
	\begin{defn}~
	\begin{enumerate}
	\item A \textit{Finsler metric} is a function $TM \to \mathbb R_{\geq 0}$, such that
	\begin{itemize}
	\item $F(x,\lambda \xi) = \lambda F(x,\xi)$ for all $\lambda > 0$.
	\item $F|_{TM \backslash 0}$ is smooth and the matrix $g_{ij}|_{(x,\xi)} := \Big( \tfrac12 \frac{\partial^2 F^2}{\partial{\xi^i} \partial{\xi^j}} \Big)_{ij}$ is positive definite for all $(x,\xi) \in TM \backslash 0$.
	\end{itemize}
	
	\item The geodesics of $F$ are defined as the solutions to the Euler-Lagrange equation $E_i(L,c):=L_{x^i} - \frac{d}{dt}(L_{\xi^i})=0$ for the Lagrangian $L=\tfrac12 F^2$. \\
	\item The geodesic spray of $F$ is the globally defined vector field $S$ on $TM \backslash 0$, whose integral curves projected to $M$ are exactly the geodesics.
	\item Two Finsler metrics $\hat F, \check F$ are projectively equivalent, if any geodesic of $\hat F$ is a geodesic of $\check F$ after an orientation preserving reparametrization.
	\end{enumerate}
	\end{defn}
	The Euler-Lagrange equation for a Finsler metric $F$ and $\tfrac12 F^2$ are related by
	$$E_i(\tfrac 12 F^2,c)
	=F F_{x^i} - \frac{d}{dt}(F F_{\xi^i})	
	= F \cdot E_i(F,c) - \frac{d F}{dt} F_{\xi^i},$$
	so that the solutions to $E_i(F,c)=0$ are all orientation preserving reparametrizations of geodesics of $F$.
	
	Two metrics $\hat F, \check F$ are projectively equivalent, if and only if every geodesic of $\hat F$ is a solution of the Euler-Lagrange equation $E_i(\check F, c)=0$.	Let $S= \xi^i \partial_{x^i} - 2G^i \partial_{\xi^i}$ be the geodesic spray of $\hat F$. Then $\hat F, \check F$ are projectively equivalent, if and only if
	\begin{equation}\label{EquationRapcsak}
	\check F_{x^i} - \check F_{\xi^i x^j} \xi^j +2G^j \check F_{\xi^i \xi^j}=0.
	\end{equation}
	
	\begin{example}\label{ExampleTrivialEquivalence}
	Suppose two Finsler metrics are related by $\check F= \lambda \hat F + \beta$, where $\lambda >0$ and $\beta$ is a 1-form on $M$. Then $\hat F$ and $\check F$ are projectively equivalent, if and only if $\beta$ is closed.
	
	Indeed, let $\beta=\beta_k dx^k$. Using that $\hat F$ is projectively equivalent to itself equation (\ref{EquationRapcsak}) is equivalent to
	$$\beta_{x^i}-\beta_{\xi^i x^j}\xi^j+2G^j \beta_{\xi^i\xi^j}=0,$$
	which is satisfied if and only if $(\beta_j)_{x^i}-(\beta_i)_{x^j} \equiv 0$ for all $i,j$, that is if $\beta$ is closed.
	\end{example}
	
	Let us now proof proposition \ref{FactIisIntegral}.
	\begin{proof}[Proof of \ref{FactIisIntegral}]
		Let $\hat F$ and $\check F$ be projectively equivalent Finsler metrics on a surfaces $\mathcal S$. We shall show that $I: T\mathcal S \backslash 0 \to \mathbb R$ defined in local coordinates by 
		$I(x,\xi):=\frac{\operatorname{tr} \hat h}{\operatorname{tr}\check h}\big|_{(x,\xi)}=\frac{\hat F_{\xi^1 \xi^1}+\hat F_{\xi^2 \xi^2}}{\check F_{\xi^1 \xi^1}+\check F_{\xi^2 \xi^2}}\big|_{(x,\xi)}$ is well-defined and constant along the integral curves of the geodesic sprays of $\hat F$ and $\check F$.

		The Hessians of $F$ and $\tfrac12 F^2$ are related by $g_{ij}=F h_{ij}+F_{\xi^i}F_{\xi^j}$ and it follows by the positive definiteness of $g_{ij}$ and homogeneity, that $h_{ij}|_{(x,\xi)}\nu^i\nu^j=0$ if and only if $\nu$ is a multiple of $\xi$. Thus $\det h = 0$ and $\operatorname{tr} h \not= 0$, as otherwise $h$ would vanish.	
		
		Let $S= \xi^i \partial_{x^i} - 2G^i \partial_{\xi^i}$ be the geodesic spray of $\hat F$. By (\ref{EquationRapcsak}) for $F \in \{ \hat F, \check F\}$ we have
		$$F_{x^i}-F_{\xi^i x^\ell}\xi^\ell + 2G^\ell h_{i \ell}  =0,$$
		and thus by differentiating by $\xi^i$ and changing sign
		$$S(h_{ii}) -2G^\ell_i h_{i\ell}=0.$$
		Adding the two equations and using (\ref{EquationHessian}) gives
		$$S(\operatorname{tr} h)=2G_1^\ell h_{1\ell} + 2G^\ell_2 h_{2\ell}
		=\underbrace{2 \frac{G^1_1 (\xi^2)^2 - (G^1_2 + G^2_1)\xi^1 \xi^2 + G^2_2 (\xi^1)^2}{(\xi^1)^2 + (\xi^2)^2}}_{c(x,\xi):=} \operatorname{tr}h.$$
		As this is a linear ODE along the integral curves of $S$, any two solutions must be a constant multiple of each other along the integral curves. Let $\operatorname{tr} \hat h = I(x,\xi) \operatorname{tr}\check h$. Then
		$$c \operatorname{tr} \hat h = S(\operatorname{tr} \hat h) = S(I \operatorname{tr} \check h) = S(I) \operatorname{tr} \check h + cI \operatorname{tr} \check h = S(I)\operatorname{tr}\check h + c \operatorname{tr} \hat h,$$
		thus $S(I)=0$ as claimed.
		
		Now let us show that the function $I$ is well-defined.
		The value $I(x,\xi)$ is defined such that $\hat F_{\xi^i \xi^j}|_{(x,\xi)} = I(x,\xi) \check F_{\xi^i \xi^j}|_{(x,\xi)}$.
		Let $\overline x^i (x)$ be a change of coordinates. Then $\overline \xi^i(x,\xi) = \frac{\partial \overline x^i}{\partial x^j}\xi^j$ and $F_{\overline \xi^i \overline \xi^j} 
		= F_{\xi^k \xi^\ell} \frac{\partial \xi^k}{\partial \overline \xi^i}\frac{\partial \xi^\ell}{\partial \overline \xi^j}
		= F_{\xi^k \xi^\ell} \frac{\partial x^k}{\partial \overline x^i}\frac{\partial x^\ell}{\partial \overline x^j}$. 
		It follows that $\hat F_{\overline \xi^i \overline\xi^j}|_{(\overline x,\overline\xi)} = I(\overline x,\overline \xi) \check F_{\overline\xi^i \overline\xi^j}|_{(\overline x,\overline \xi)}$. Thus $I: T\mathcal S \backslash 0 \to \mathbb R$ is defined independent of the choice of coordinates.
		\end{proof}
	
	\section{Topological entropy and proof of \ref{FactPositiveEntropy}}\label{SectionPositiveEntropy}
	\begin{defn}Let $G$ be a group generated by finite set $S \subseteq G$. The group is of exponential growth, if for some $k>0$ it holds $\#B_n \geq C^{kn}$ for all $n\in \mathbb N$, where $\# B_n$ denotes the number of elements in $G$ that can be written as a product of at most $n$ elements from $S$ and inverses of those.
	\end{defn}
	It can be shown that this definition does not depend on the choice of the finite generator $S$.
	
	\begin{defn}Let $(X,d)$ be a compact metric space with symmetric distance function $d$ and $S^t: \mathbb R \times X \to X$ a flow. Define the family of distance functions $$d^t(x,y)=\max\limits_{0 \leq \tau \leq t} d(S^\tau(x),S^\tau(y)).$$
			 For $ \epsilon, t >0$ let $H^t_\epsilon$ be the maximal cardinality of $\epsilon$-separated sets in the metric space $(X,d^t)$.
			 The topological entropy of the flow $S^t$ is defined as
			\begin{equation*}\label{EquationDefinitionTopologicalEntropy}				
			h_{\operatorname{top}}= \lim_{\epsilon \to 0}  \limsup_{t \to \infty} \tfrac1t \log(H^t_\epsilon).
			\end{equation*}
	\end{defn}
	The topological entropy always exists and is a number in $[0, \infty]$. It does not depend on the choice of the metric $d$, but only on the induced topology.
	
%	\begin{lem}
%		The geodesic flow of any Finsler metric on a closed manifold $M$, whose fundamental group $\pi(M)$ is of exponential growth, has positive topological entropy.
%		\end{lem}

		Let us now proof proposition \ref{FactPositiveEntropy}: The geodesic flow of any Finsler metric on a compact manifold, whose fundamental group is of exponential growth, has positive topological entropy. 
		To deal with the irreversibility of the metrics, we shall use the reversibility number $\lambda_F := \sup\limits_{\xi \in TM} \frac{F(-\xi)}{F(\xi)} \geq 1$, which is finite if $M$ is compact as its unit sphere bundle is compact.
		
		Furthermore, let $d_F: M \times M \to \mathbb R$ the (possibly not symmetric) distance function of $F$, where $d_F(x,y)$ is defined as the infimum of the $F$-length of all curves from $x$ to $y$.
		Then the reversibility number of the $d_F$ given by $\lambda_{d_F}:=\sup\limits_{x,y \in M} \frac{d(y,x)}{d(x,y)}$ is at most $\lambda_F$.

		\begin{proof}[Proof of \ref{FactPositiveEntropy}]	
		Let $(\widetilde M, p: \tilde M \to M)$ be the universal cover of $M$. Let $d \mu$ be a volume form on $M$ invariant under $F$-isometries (e.g. the Holmes-Thompson or Busemann volume) and $d: M \times M \to \mathbb R$ the distance function induced by $F$. Let $\tilde F(\tilde \xi) := F(d \pi(\tilde \xi))$ be the lift of the Finsler metric to the universal cover $\widetilde M$, so that $p$ is a local isometry. Denote by $d \tilde \mu$ the corresponding volume form of $\tilde F$ and by $\tilde d$ the induced distance function on $\widetilde M$.
		
		\textbf{Firstly,} we show that the volume of closed forward balls $B_r(\tilde x)=\{\tilde y \in \widetilde M \mid \tilde d(\tilde x, \tilde y) \leq r\}$ in $\widetilde M$ grows exponentially with their radius as a consequence of the exponential growth of the fundamental group, that is for any $\tilde x \in \widetilde M$
		\begin{equation}\label{EquationExponentialGrowthOfBalls}\exists s_0,d_0,\mu_0, k >0 ~~ \forall  n \in \mathbb N:~~ \tilde \mu\big(B_{s_0 n+d_0}(\tilde x)\big) \geq \mu_0 e^{kn}.\end{equation}				
		By compactness and definition of the universal covering, there is a finite family of open connected, simply connected subsets $U_i \subseteq M$, that covers $M$ and such that $p^{-1}(U_i)$ is the union of open, disjoint subsets $V_{ij} \subseteq \widetilde M$, such that $p: (V_{ij}, \tilde F) \to (U_i,F)$ is an isometry.
		
		Fix $\tilde x \in \widetilde M$ and set $x := p(\tilde x)$. Let $S=\{a_1,..,a_\ell\}$ be a set of closed, smooth curves through $x$ generating the fundamental group $\pi(M,x)$ and assume that $S$ is closed under inversion. Let $\# B_n$ the number of elements that can we written as a product of at most $n$ elements from $S$. By assumption, there is $k>0$, such that $\#B_n \geq e^{k n}$ for all $n \in \mathbb N$. Set
		$$s_0 := \max_i \Big( \operatorname{length}_F(a_i) \Big) \qquad d_0 := \max_i \Big(\operatorname{diam}_d(U_i)\Big) \qquad
		\mu_0 := \min_i \Big( \mu(U_i) \Big).$$
		All three are positive. Let $\tilde x \in V_{ij}$ and note that $\tilde \mu(V_{ij})=\mu(U_i) \geq \mu_0$.
		For $a \in \pi(M,x)$, let $|a|$ be the smallest number of elements from $S$ whose product gives $a$.
		Consider the  covering transformation $\Gamma(a): V_{ij} \to \widetilde M$, that maps a $\tilde y \in V_{ij}$ to the endpoint of the unique lift of the curve $cac^{-1}$ starting at $\tilde y$, where $c$ is any curve from $y:=p(\tilde y)$ to $x$ inside $U_i$.
		Then
		$$\tilde d(\tilde x, \Gamma(a) \tilde y) \leq \inf_{\substack{c \text{ curve in }U_i\\\text{from $y$ to $x$ }}} \Big( \operatorname{length}(\tilde a) + \operatorname{length}(\tilde c^{-1})\Big) \leq s_0 |a| + d_0,$$
		where $\tilde a$ and $\tilde c^{-1}$ are the unique lifts of $a$ and $c^{-1}$ to $\widetilde M$ starting at $\tilde x$ and $\Gamma(a)\tilde x$ respectively, and have the same length as $a$ and $c^{-1}$ because $p$ is a local isometry. 
		Thus $\Gamma(a) V_{ij} \subseteq B_{s_0|a|+d_0}(\tilde x)$.
		Furthermore for different $a$ the sets $\Gamma(a)V_{ij}$ are disjoint and we have $p\big(\Gamma(a)V_{ij}\big)=U_i$, hence $\tilde \mu(\Gamma(a)V_{ij}) = \mu(U_i) \geq \mu_0$. It follows that
		$$\tilde \mu(B_{s_0 n + d_0}) \geq \tilde \mu\Big( \bigcup_{|a| \leq n} \Gamma(a)V_{ij}  \Big) \geq \mu_0 \cdot \# B_n \geq \mu_0 e^{kn}.$$
		
		%So far,  the constant $s_0$ and $c$ depend on the point $x$ and the choice of generator curves. 
		%By considering the symmetrization of $F$ and using compactness, there is a number $D>0$, such that each two points on $M$ can be joint by a curve, such that $\operatorname{length}(c)+\operatorname{length}(c^{-1}) \leq D$. Set $s'_0:=s_0 + D$.
		%Let $\tilde y \in \widetilde M$ be another point, $y:=p(\tilde y)$ and $c$ a curve from $x$ to $y$ as above. Then $\{ca_ic^{-1}\}$ is a generator of $\pi(M,y)$ and the length of each generating element is bounded by $s_0 + D =s_0'$. Hence using $s_0'$ we have established (\ref{EquationExponentialGrowthOfBalls}).
		
		\textbf{Secondly,} let $\rho$ and $\tilde \rho$ be the symmetrizations of $d$ and $\tilde d$ on $M$ and $\widetilde M$ respectively, that is $\rho( y_1,  y_2):=\frac{ d( y_1, y_2)+ d( y_2, y_1)}{2}$. and $\tilde \rho(\tilde y_1, \tilde y_2):=\frac{\tilde d(\tilde y_1,\tilde y_2)+\tilde d(\tilde y_2,\tilde y_1)}{2}$.
		Choose $\epsilon>0$ such that any $\rho$-ball of radius $2\epsilon$ in $M$ is contained in a set $U_i$, for example quarter of the Lebesgue number of the covering $U_i$ for the distance $\rho$. Then any $\tilde \rho$-ball of radius $2\epsilon$ in $\widetilde M$ is contained in one of the sets $V_{ij}$. In particular is the $\tilde \mu$-measure of $\tilde \rho$-balls of radius $2\epsilon$ bounded from above by a finite number $c_0>0$.
		
		 For fixed $\tilde x \in \widetilde M$, we show existence of a sequence $r_i \to \infty$, such that for each $r_i$ there are at least $\tfrac1{c_0} e^{\frac k 2 r_i}$ unit speed geodesics $\tilde \gamma_j^{r_i}$ of length $r_i$ starting from $\tilde x$, whose endpoints are $\epsilon$-seperated for the symmetrized distance $\tilde \rho$, where $k>0$ is as in the first part.
			
		Let $\delta >0$ and consider the $\tilde d$-annuli $U_{r}:=B_{r+\delta}(\tilde x) \backslash B_{r}(\tilde x)$. There is a sequence $r_i \to \infty$ such that $\tilde \mu(U_{r_i}) \geq e^{\tfrac k2 r_i}$. Indeed, suppose the inequality is violated for all but finitely many members of the sequence $r_i = i \delta$. Then $\tilde \mu (B_{n\delta}(\tilde x))= \sum_{i=0}^{n-1} \tilde \mu (U_{i\delta }) \leq \frac{2}{k \delta} e^{\tfrac k 2 \delta n} + C$, where $C$ is a constant independent of $n$. This contradicts (\ref{EquationExponentialGrowthOfBalls}).
		
		Let $Q_{r_i}$ be a maximal $2\epsilon$-separated set for $\tilde \rho$ in $U_{r_i}$. Then the $\tilde \rho$-balls of radius $2\epsilon$ with centers $\tilde q \in Q_{r_i}$ must cover $U_{r_i}$ and hence 
		$$c_0 \cdot \#Q_{r_i} \geq \tilde \mu(U_{r_i})\geq e^{\tfrac k2 r_i}.$$
		As $(M,F)$ is forward complete, so is $(\widetilde M, \tilde F)$ and for each $\tilde q \in Q_{r_i}$ we may choose a unit speed geodesic from $\tilde x$ to $\tilde q$ of length between $r_i$ and $r_i+\delta$. Let $\tilde \gamma_1, \tilde \gamma_2$ be two such geodesics ending at $\tilde q_1, \tilde q_2$. Then using $\tilde \rho(\tilde y_1, \tilde y_2) \leq \frac{1+\lambda}{2}\tilde d(\tilde y_1, \tilde y_2)$, where $\lambda$ is the reversibility number of $F$, we have
		\begin{align*}
		\tilde \rho(\gamma_1(r),\gamma_2(r)) & \geq \tilde \rho (\tilde q_1,\tilde q_2) - \tilde \rho (\tilde \gamma_1(r),q_1)- \tilde \rho (\tilde \gamma_1(r),q_2)	\\	
		& \geq 2\epsilon - \frac{1+\lambda}{2} \Big(\tilde d(\gamma_1(r),q_1) + \tilde d(\gamma_2(r),q_1)\Big)\\
		&\geq 2\epsilon - \frac{1+\lambda}{2} \cdot 2 \delta
\end{align*}		
		Thus choosing $\delta = \frac{\epsilon}{1+\lambda}$ gives the desired sequence $r_i$ and geodesics $\gamma^{r_i}_j$.
		
		\textbf{Finally,} let $\hat \rho$ be any symmetric distance on $TM \backslash 0$, such that $\hat \rho\big(\xi,\nu\big) \geq \rho\big(\pi(\xi), \pi(\nu)\big)$, where $\pi: TM \to M$ is the bundle projection.		
		By definition, the topological entropy of the geodesic flow is
		$h_{\operatorname{top}}= \lim\limits_{\epsilon \to 0}  \limsup\limits_{t \to \infty} \tfrac1t \log(H^t_\epsilon)$, where $H^t_{\epsilon}$ is the maximal cardinality of an $\epsilon$-separated set with respect to the distance $\hat \rho^t$ on $TM\backslash 0$ defined by $\hat \rho^t(\xi,\nu)=\max\limits_{0 \leq \tau \leq t} \hat \rho \big(S^\tau \xi, S^\tau \nu\big)$, where $S^\tau$ is the geodesic flow of $F$.

		 Let $r_i$ and $\gamma_j^{r_i}$ as before. Then the starting vectors of the to $M$ projected geodesics $\gamma^{r_i}_j := p(\tilde \gamma^{r_i}_j)$ are $\epsilon$-separated with respect to $\hat \rho^{r_i}$. Indeed, let $\gamma_1, \gamma_2$ be two such geodesics and $t\in (0,r_i]$ the smallest value, such that $\tilde \rho \big( \tilde \gamma_1(t), \tilde \gamma_2(t) \big)= \epsilon$. Then $\tilde \gamma_1(t)$ and $\tilde \gamma_2(t)$ lie in the same $V_{ij}$ and as $p$ is a local isometry also for the symmetrized distances, we have $ \rho \big(  \gamma_1(t), \gamma_2(t) \big)= \epsilon$.
		Because $H^t_{\epsilon}$ is monotonously increasing as $\epsilon \to 0$, it follows that
		$$h_{\operatorname{top}} \geq \limsup\limits_{r_i \to \infty} \tfrac1{r_i} \log(\tfrac{1}{c_0} e^{\tfrac k2 r_i}) \geq \tfrac k2>0.$$
		\end{proof}

	%\textbf{Acknowledgement.} 

	\bibliographystyle{plain}
	\bibliography{literature}

\begin{thebibliography}{10}

\bibitem{alvarez}
J.~\'{A}lvarez Paiva and G.~Berck.
\newblock Finsler surfaces with prescribed geodesics.
\newblock {\em ArXiv e-prints}, February 2010.

\bibitem{Arcostanzo}
M.~Arcostanzo.
\newblock Des m\'etriques finsl\'eriennes sur le disque \`a partir d'une
  fonction distance entre les points du bord.
\newblock In {\em S\'eminaire de {T}h\'eorie {S}pectrale et {G}\'eom\'etrie,
  {N}o.\ 10, {A}nn\'ee 1991--1992}, volume~10 of {\em S\'emin. Th\'eor. Spectr.
  G\'eom.}, pages 25--33. Univ. Grenoble I, Saint-Martin-d'H\`eres, 1992.

\bibitem{CrampinMestdagSaundersTheMultiplierApproach}
M.~Crampin, T.~Mestdag, and D.~J. Saunders.
\newblock The multiplier approach to the projective {F}insler metrizability
  problem.
\newblock {\em Differential Geom. Appl.}, 30(6):604--621, 2012.

\bibitem{DinaburgOnTheRelationsAmongVariousEntropy}
E.~Dinaburg.
\newblock On the relations among various entropy characteristics of dynamical
  systems.
\newblock {\em Mathematics of the {USSR}-Izvestiya}, 5(2):337--378, apr 1971.

\bibitem{ManningTopologicalEntropyForGeodesicFlows}
A.~Manning.
\newblock Topological entropy for geodesic flows.
\newblock {\em Ann. of Math. (2)}, 110(3):567--573, 1979.

\bibitem{MatveevTopalovTrajectoryEquivalence}
V.~Matveev and P.~Topalov.
\newblock Trajectory equivalence and corresponding integrals.
\newblock {\em Regul. Chaotic Dyn.}, 3(2):30--45, 1998.

\bibitem{TopalovMatveevMetricWithErgodic}
V.~Matveev and P.~Topalov.
\newblock Metric with ergodic geodesic flow is completely determined by
  unparameterized geodesics.
\newblock {\em Electron. Res. Announc. Amer. Math. Soc.}, 6:98--104, 2000.

\bibitem{TopalovMatveevGeodesicEquivalenceViaIntegrability}
V.~Matveev and P.~Topalov.
\newblock Geodesic equivalence via integrability.
\newblock {\em Geom. Dedicata}, 96:91--115, 2003.

\bibitem{MettlerGeodesicRigidity}
T.~Mettler.
\newblock Geodesic rigidity of conformal connections on surfaces.
\newblock {\em Math. Z.}, 281(1-2):379--393, 2015.

\bibitem{MettlerPaternainConvexProjective}
T.~Mettler and G.~Paternain.
\newblock Convex projective surfaces with compatible weyl connection are
  hyperbolic, 2018.
\newblock arXiv:1804.04616.

\bibitem{PaternainEntropyAndCompletelyIntegrableHamiltonianSystems}
G.~Paternain.
\newblock Entropy and completely integrable {H}amiltonian systems.
\newblock {\em Proc. Amer. Math. Soc.}, 113(3):871--873, 1991.

\bibitem{PaternainFinslerStructures}
G.~Paternain.
\newblock Finsler structures on surfaces with negative {E}uler characteristic.
\newblock {\em Houston J. Math.}, 23(3):421--426, 1997.

\end{thebibliography}

\end{document}